\let\ssection=\section
\renewcommand{\section}{\setcounter{equation}{0}\ssection}
\theoremstyle{plain}
\newtheorem{thm}{Theorem}%[section]
\newtheorem*{thm*}{Theorem}
\newtheorem{lem}{Lemma}[section]
\newtheorem{conj}{Conjecture}
\theoremstyle{definition}
\newtheorem{rem}[lem]{Remark}
\newtheorem{ex}[lem]{Example}
\newcommand{\R}{\mathbb{R}}
\newcommand{\Z}{\mathbb{Z}}
\newcommand{\bbF}{\mathbb{F}}
\newcommand{\bbH}{\mathbb{H}}
\newcommand{\bbO}{\mathbb{O}}
\newcommand{\N}{\mathbb{N}}
\newcommand{\A}{\mathcal{A}}
\newcommand{\I}{\mathcal{I}}
\newcommand{\B}{\mathcal{B}}
\newcommand{\F}{\mathcal{F}}
\newcommand{\GL}{\mathrm{GL}}
\newcommand{\gS}{\mathfrak{S}}
\def\a{\alpha}
\def\b{\beta}
\def\d{\delta}
\def\om{\omega}
\def\r{\rho}
\begin{document}

\title[]{Extremal set theory, cubic forms on $\bbF_2^n$ and\\
 Hurwitz square identities}

\author{Sophie Morier-Genoud
\and
Valentin Ovsienko}\thanks{ovsienko@math.univ-lyon1.fr, tel:+33666824954}

\address{
Sophie Morier-Genoud,
Sorbonne Universit\'es, UPMC Univ Paris 06, UMR 7586, Institut de Math\'ematiques de Jussieu- Paris Rive Gauche, Case 247, 4 place Jussieu, F-75005, Paris, France}

\address{
Valentin Ovsienko,
CNRS,
Laboratoire de Math\'ematiques, 
Universit\'e de Reims-Champagne-Ardenne, 
FR 3399 CNRS, F-51687, Reims, France
}

\email{sophie.morier-genoud@imj-prg.fr,
ovsienko@math.univ-lyon1.fr}

\date{}
\keywords{}

\begin{abstract}
We consider a family, $\F$, of subsets of an $n$-set
such that the cardinality of the symmetric difference of any two elements $F,F'\in\F$
is not a multiple of $4$.
We prove that the maximal size of $\F$ is bounded by $2n$, unless $n\equiv{}3\mod4$
when it is bounded by $2n+2$.
Our method uses cubic forms on $\bbF_2^n$
and the Hurwitz-Radon theory of square identities.
We also apply this theory to obtain some information about
Boolean cubic forms and so-called additive quadruples.
\end{abstract}

\maketitle

\thispagestyle{empty}

%%%%%%%%%%%%%%%%%%%%%%%%%%%%%%%%%
\section{Introduction and the main results}
%%%%%%%%%%%%%%%%%%%%%%%%%%%%%%%%%

In this note, we link different subjects:
extremal set theory, Boolean cubic forms,
non-associative algebras and
the Hurwitz theory of ``square identities''.

Let $\F$ be a family of subsets of $\{1,2,\ldots,n\}$.
For $F,F'\in\F$, define the {\it symmetric difference}
$$
F\oplus{}F':=\left(F\setminus{}F'\right)\cup{}\left(F'\setminus{}F\right).
$$
Denote by $d(F,F')$ the cardinality of $F\oplus{}F'$,
which is sometimes called the {\it Hamming distance} between
the sets $F$ and $F'$.
The following is our most elementary statement.

\begin{thm}
\label{ScratchF}
If for every distinct $F,F'\in\F$, the distance
$d(F,F')$ is not a multiple of~$4$,
then
$$
|\F|\leq
\left\{
\begin{array}{lll}
2n,&
n\equiv0,1,2&\mod4\\[4pt]
2n+2,&
n\equiv3&\mod4.
\end{array}
\right.
$$
This bound is sharp.
\end{thm}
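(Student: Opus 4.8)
The plan is to pass to characteristic vectors and reduce the whole statement, via the behaviour of Hamming weight modulo $4$, to an elementary question in linear algebra over $\bbF_2$. Identifying each $F\in\F$ with its indicator $u\in\bbF_2^n$, the symmetric difference becomes addition and $d(F,F')$ becomes the Hamming weight $|u+v|$, so the hypothesis reads: $|u+v|\not\equiv 0\pmod 4$ for all distinct $u,v\in S$, where $S\subseteq\bbF_2^n$ is the associated set. The computational engine is the congruence $|u+v|\equiv |u|+|v|+2\langle u,v\rangle\pmod 4$, where $\langle u,v\rangle\in\{0,1\}$ is the $\bbF_2$-inner product; this exhibits $x\mapsto |x|\bmod 4$ as a $\Z/4$-valued quadratic form whose $\bbF_2$-quadratic part is $\sum_{i<j}x_ix_j$ (the forms of the title), and it iterates to $\bigl|\bigoplus_i v_i\bigr|\equiv\sum_i|v_i|+2\sum_{i<j}\langle v_i,v_j\rangle\pmod 4$.

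First I would split by parity of weight: write $S=S_0\sqcup S_1$ according to $|u|\bmod 2=\langle u,u\rangle$. If $u,v$ lie in different classes then $|u+v|$ is odd, so the constraint is automatic; hence the problem \emph{decouples}, $|S|=|S_0|+|S_1|$, and one only needs to bound each $S_\e$. Fixing a class and translating by one of its elements (which preserves all pairwise distances) I may assume $0\in S_\e$; the remaining $m:=|S_\e|-1$ vectors $v_1,\dots,v_m$ then satisfy $|v_i|\equiv 2\pmod4$ and $|v_i+v_j|\equiv 2\pmod4$ for $i\neq j$. By the congruence above these are equivalent to the purely $\bbF_2$-linear conditions $\langle v_i,v_i\rangle=0$ and $\langle v_i,v_j\rangle=1$ for $i\neq j$.

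The core is then two observations. First, all $v_i$ have even weight, so they lie in the hyperplane $E=\{x:\sum x_i=0\}$ of dimension $n-1$. Second, pairing any relation $\bigoplus_{i\in A}v_i=0$ with each $v_j$ and using $\langle v_i,v_j\rangle=1-\delta_{ij}$ forces $|A|$ to be simultaneously even (for $j\notin A$) and odd (for $j\in A$); hence the \emph{only} possible nontrivial relation is the total one $A=\{1,\dots,m\}$, and it requires $m$ odd. Therefore $\dim\Span(v_1,\dots,v_m)\ge m-1$, which with the first observation gives $m-1\le n-1$, i.e.\ $m\le n$. Moreover $m=n$ forces the total relation $\bigoplus_i v_i=0$, and feeding this into the weight formula yields $0\equiv 2m+2\binom m2=m(m+1)\pmod4$, which for odd $m$ holds only when $m\equiv 3\pmod4$. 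Thus $m=n$ can occur only for $n\equiv 3\pmod4$, so $|S_\e|=m+1\le n$ when $n\not\equiv 3$ and $\le n+1$ when $n\equiv 3$; summing the two classes gives exactly $2n$ and $2n+2$.

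For sharpness I would exhibit matching families: singletons together with their complements already give $2n$ when $n\equiv 1,3\pmod4$, and for $n\equiv 3$ the co-singletons $\bar e_i$ (weight $n-1\equiv 2$, pairwise intersection $n-2$ odd, total sum $0$) form one class of size $n+1$, producing $2n+2$. I expect the main difficulty to lie precisely here: the crude rank argument only yields $|S|\le 2(n+1)$, and it is the genuinely mod-$4$ computation $m(m+1)\bmod4$ — detecting when the single admissible linear relation is realizable — that trims this to the sharp $2n$ and isolates the extra $+2$ at $n\equiv 3$. Matching this with explicit optimal families in the remaining residues $n\equiv 0,2\pmod4$ is the part I expect to be most delicate, and it is where the constructions are most naturally organised through the Hurwitz--Radon square identities.
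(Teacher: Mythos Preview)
Your upper-bound argument is correct and takes a genuinely different route from the paper. The paper deduces the bound from the Hurwitz--Radon theorem: it realises the condition through the cubic form $\a_\bbO$, builds a twisted group algebra $(\R[\bbF_2^n],f)$ in which the hypothesis forces multiplicativity of the Euclidean norm, obtains a square identity of size $[|A|,2^n,2^n]$, and invokes Hurwitz--Radon to conclude $|A|\le\r(2^n)$; the residue $n\equiv0$ then needs a separate reduction via $x\mapsto x+\om$. Your route is entirely elementary: the parity split decouples the problem, and within each class the Gram condition $\langle v_i,v_j\rangle=1-\delta_{ij}$ over $\bbF_2$ forces near-independence inside the even-weight hyperplane, with the single admissible total relation tested by the genuinely mod-$4$ identity $\bigl|\bigoplus_i v_i\bigr|\equiv\sum_i|v_i|+2\sum_{i<j}\langle v_i,v_j\rangle$. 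The step $m(m+1)\equiv0\pmod4$ is exactly what the Oddtown-style argument in the paper's final remark misses, and it shows that the paper's suggestion that the theorem ``cannot be entirely deduced from the Oddtown Theorem by elementary methods'' is in fact too pessimistic. What the paper's approach buys in exchange is generality: the same machinery proves Theorem~\ref{ScratchFCorBis} for an \emph{arbitrary} cubic form $\a$ and establishes the link to Hurwitz square identities and additive quadruples, none of which your Gram-matrix computation touches.

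One correction on sharpness: you anticipate that the constructions for $n\equiv0,2\pmod4$ will be ``most delicate'' and will require the Hurwitz--Radon framework, but in fact they are the easiest cases. The set
\[
\{0,\;e_1,\ldots,e_n,\;e_1+e_2,\ldots,e_1+e_n\}
\]
has $2n$ elements and every nonzero pairwise sum has weight $1$, $2$, or $3$, so it works for \emph{every} $n$; this is the paper's construction in Section~\ref{HurSec}. It is only at $n\equiv3\pmod4$ that something further is needed to reach $2n+2$, and there your co-singleton family (equivalently, adjoining $0$ and $\om$ to the singletons and their complements) already does the job. No appeal to square identities is required for any of the extremal examples.
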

\noindent
Note that replacing $4$ by another integer, say $3$ or $5$, 
the bound for the size of $\F$ becomes quadratic in $n$.

Theorem~\ref{ScratchF} belongs to the vast domain
of extremal set theory,
see~\cite{FW} and~\cite{VV} for an overview.
The classical Oddtown Theorem  states: {\em if the cardinality
of every $F\in\F$ is odd, and that of every intersection $F\cap{}F'$ is even, then 
$|\F|\leq{}n$}, see~\cite{BF,VV} and references therein.
It is well-known that the bound remains $n$ if one switches ``odd'' and ``even'',
but if one replaces ``odd'' by ``even'' or ``even'' by ``odd'', then
the bound becomes exponential in $n$.
In Theorem~\ref{ScratchF}, we impose
{\it a priori} no restriction on the members of the family~$\F$.

Theorem~\ref{ScratchF} is related to the Oddtown Theorem by the formula
\begin{equation}
\label{DisF}
d(F,F')
=|F|+
|F'|-
2\,|F\cap{}F'|.
\end{equation}
This suggests an idea to replace intersection of sets
by symmetric difference, and parity condition by double parity condition.
The Oddtown Theorem directly implies the upper bound $|\F|\leq2n+2$
for every $n$ and $|\F|\leq2n$ for $n\equiv0\;(\!\!\!\!\mod4)$.
However, it seems that Theorem~\ref{ScratchF} 
cannot be entirely deduced from the Oddtown Theorem by elementary methods.

We will use linear algebra over the field $\bbF_2=\{0,1\}$,
replacing $\F$ by a subset $A\subset\bbF_2^n$.
Symmetric difference of sets then corresponds to
sum of vectors.
However, unlike the case of the Oddtown Theorem, the proof 
of Theorem~\ref{ScratchF} is not reduced to linear algebra.
Using cubic forms on $\bbF_2^n$, we deduce Theorem~\ref{ScratchF}
from the celebrated Hurwitz-Radon theorem~\cite{Hur,Rad}.

The second main goal of this note is to study invariants of
cubic forms on $\bbF_2^n$.
The following statement is a 
strengthening of Theorem~\ref{ScratchF}.

\begin{thm}
\label{ScratchFCorBis}
(i)
Given a cubic form $\a:\bbF_2^n\to\bbF_2$ and a subset $A\subset\bbF_2^n$,
assume that for every $x\not=x'\in{}A$ one has $\a(x+x')=1$, then $|A|\leq\r(2^n)$,
where $\r$ is the classical Hurwitz-Radon function.

(ii)
This bound is sharp, at least in the cases $n\equiv1,2$ or $3\;(\!\!\!\!\mod4)$.
\end{thm}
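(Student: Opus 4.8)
The plan is to deduce the bound from the Hurwitz--Radon theorem by turning $A$ into a family of pairwise anticommuting complex structures. First I would normalise: since the hypothesis $\a(x+x')=1$ depends only on the differences $x+x'$, translating $A$ leaves both the hypothesis and the form $\a$ unchanged, so I may assume $0\in A$ and write $A=\{0,a_1,\dots,a_{k-1}\}$ with $k=|A|$. Taking $x'=0$ gives $\a(a_i)=1$ for every $i$, and the remaining hypotheses read $\a(a_i+a_j)=1$ for $i\neq j$. Introducing the second polar form
$$
b_\a(x,y):=\a(x+y)+\a(x)+\a(y),
$$
these two facts combine into $b_\a(a_i,a_j)=1+1+1=1$ for $i\neq j$, while $b_\a(a_i,a_i)=\a(0)=0$.

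Next I would realise this information inside a real twisted group algebra $\A$ on $\bbF_2^n$, of dimension $2^n$, with orthonormal basis $\{u_x\}$ and product $u_xu_y=(-1)^{f(x,y)}u_{x+y}$ for a twisting function $f$ adapted to $\a$ so that $u_x^2=(-1)^{\a(x)}$ and $b_\a$ is the commutation factor $f(x,y)+f(y,x)$. The elements $u_{a_1},\dots,u_{a_{k-1}}$ then satisfy $u_{a_i}^2=-1$ and $u_{a_i}u_{a_j}=-u_{a_j}u_{a_i}$ for $i\neq j$. Passing to the operators of left multiplication $L_i:=L_{u_{a_i}}$ on $\A\cong\R^{2^n}$, each $L_i$ permutes the basis up to sign and is therefore orthogonal, and I want to show $L_i^2=-\Id$ together with $L_iL_j+L_jL_i=0$ for $i\neq j$. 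Granting this, $L_1,\dots,L_{k-1}$ is a family of pairwise anticommuting orthogonal complex structures on $\R^{2^n}$, and the Hurwitz--Radon theorem bounds the number of such operators by $\r(2^n)-1$; hence $k-1\le\r(2^n)-1$, i.e. $|A|\le\r(2^n)$, proving (i).

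The hard part will be the operator identities $L_i^2=-\Id$ and $L_iL_j=-L_jL_i$. Because $b_\a$ is quadratic, not bilinear, in each argument, the twisted algebra $\A$ is in general non-associative, so $L_{u_x}L_{u_y}\neq L_{u_xu_y}$ and I cannot simply read these relations off the products of basis elements. Instead I expect to verify them directly from $f$: the squaring relation $L_i^2=L_{u_{a_i}^2}=-\Id$ and the anticommutation of operators are alternativity-type statements (they hold verbatim for the octonions via the alternating associator), and showing that the $\a$-twisted product retains enough of this structure on the elements $u_{a_i}$ is where the cubic nature of $\a$ enters in an \emph{essential} way.

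For the sharpness in part (ii), the plan is to produce, for each residue $n\equiv1,2,3\pmod4$, an explicit pair $(\a,A)$ attaining $|A|=\r(2^n)$. The base cases are the classical division algebras: for $n=1,2,3$ one has $\r(2^n)=2^n$, realised by $\C$, $\bbH$ and $\bbO$ respectively, where $A=\bbF_2^n$ is the whole space and $\a$ is the cubic form equal to $1$ on every nonzero vector (so that all $2^n-1$ imaginary units pairwise anticommute and square to $-1$). For larger $n$ in each class I would invoke the series of twisted group algebras generalising the octonions, inside which a maximal anticommuting family of operators squaring to $-\Id$ has exactly $\r(2^n)-1$ members; reading off $\a$ from $u_x^2=(-1)^{\a(x)}$ and setting $A=\{0,a_1,\dots\}$ yields the required example, the set $A$ being a suitable subset of $\bbF_2^n$ of size $\r(2^n)$ rather than a subspace once $\r(2^n)$ is not a power of $2$. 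I expect the delicate point to be arranging this maximal family to be \emph{monomial}, so that it sits inside a twisted group algebra and arises from a genuinely cubic form; the excluded case $n\equiv0$ is exactly the one where the full value $\r(2^n)=2n+1$ seems out of reach this way, consistent with the smaller bound $2n$ of Theorem~\ref{ScratchF}.
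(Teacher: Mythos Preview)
Your route is essentially the paper's: build the twisted group algebra attached to~$\a$ and invoke Hurwitz--Radon. The only real difference is packaging. The paper quotes Lemma~\ref{NashLem} (proved in \cite{MGO,LMO,Des}) to get the norm identity $\|a\|^2\|b\|^2=\|ab\|^2$ for $a\in\A$, $b$ arbitrary, hence an $[|A|,2^n,2^n]$ square identity and the bound; you instead aim directly for pairwise anticommuting orthogonal complex structures $L_i$ on $\R^{2^n}$. These are two standard, equivalent formulations of the Hurwitz--Radon bound.

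Where you flag a ``hard part'' there is none, provided you use the specific twisting function of Section~\ref{FCS}: the crucial feature is property~(c), \emph{linearity of $f$ in the second variable}. With that, for any $y$,
\[
L_x^2u_y=(-1)^{f(x,y)+f(x,x+y)}u_y=(-1)^{f(x,y)+f(x,x)+f(x,y)}u_y=(-1)^{\a(x)}u_y,
\]
and
\[
L_xL_{x'}u_y=(-1)^{f(x',y)+f(x,x')+f(x,y)}u_{x+x'+y},
\]
so $L_xL_{x'}+L_{x'}L_x=0$ reduces exactly to $\b(x,x')=f(x,x')+f(x',x)=1$, which you already computed as $\a(x+x')+\a(x)+\a(x')=1$. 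Non-associativity is irrelevant here and no alternativity is needed; property~(c) alone makes left multiplications behave associatively among themselves. This is precisely what Lemma~\ref{NashLem} encodes on the norm side.

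For part~(ii) your plan is vaguer than necessary. The paper does not search for monomial maximal families inside $\bbO_n$; it simply writes down, for the particular cubic form $\a_\bbO$, explicit Hurwitzian sets of size $2n$ (resp.\ $2n+2$) in Section~\ref{HurSec}, namely $\{0,e_1,\dots,e_n,e_1+e_2,\dots,e_1+e_n\}$ and, for $n\equiv3\pmod4$, this set together with $\om$ and $e_1+\om$. Exhibiting one cubic form with a set of the right size is all that sharpness requires.
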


Classification of cubic forms on~$\bbF_2^n$ is a fascinating problem
which is solved only for $n\leq9$; see~\cite{Hou1,BL,Lan}.
The maximal cardinality of a subset $A\subset\bbF_2^n$ such that
$\a\,\big|_{(A+A)\setminus\{0\}}\equiv1$ is an interesting characteristic of a cubic form $\a$.
It resembles the Arf invariant of quadratic forms, but of course is not 
enough for classification.

The paper is organized as follows.
First, we interpret Hurwitz sum of square identities 
in terms of extremal set theory in$t\bbF_2^n$.
This interpretation uses the Euclidean norm in some
non-associative algebras.
We then provide a construction of extremal sets reaching
the upper bound of Theorem~\ref{ScratchF}.

%%%%%%%%%%%%%%%%%%%%%%%%%%%%%%%%%
\section{Cubic forms and square identities}\label{CuT}
%%%%%%%%%%%%%%%%%%%%%%%%%%%%%%%%%

%%%%%%%%%%%%%%%%%%%%%%%%%%%%%%%%%
\subsection{Hurwitz identities}
%%%%%%%%%%%%%%%%%%%%%%%%%%%%%%%%%
A sum of square identity of size $[r,s,N]$ is an identity of the form
$$
(a_1^2+\cdots{}+a_r^2)\,(b_1^2+\cdots{}+b_s^2)
=c_1^2+\cdots{}+c_N^2,
$$
where $c_i$ are bilinear expressions in  $a_j$ and $b_k$ with
coefficients in $\Z$.
In~\cite{Hur1}, Hurwitz formulated his famous problem 
to determine all the triples $(r,s,N)$ such that there exists an identity
of size $[r,s,N]$.
The problem remains widely open, see~\cite{Sha} for a survey.

The {\it Hurwitz-Radon function} $\r$ is a function on the set of natural numbers
$\r:\N\to\N$.
If $N=2^n(2m+1)$, then $\r(N)=\r(2^n)$
(i.e., it depends only on the dyadic part of $N$), and the latter number is 
given by
$$
\r(2^n)=\left\{ \begin{array}{lcll}
2n+1, \quad&n\equiv& 0 &\mod 4\\
2n, \quad&n\equiv& 1,2 &\mod 4\\
2n+2, & n\equiv &3 &\mod 4.
\end{array}
\right.
$$

The celebrated Hurwitz-Radon theorem~\cite{Hur,Rad};
see also~\cite{Sha}, is formulated as follows:
{\it there exists an identity of size $[r,N,N]$ if and only if $r\leq\r(N)$}.
This is the only case where the Hurwitz problem is solved.

%%%%%%%%%%%%%%%%%%%%%%%%%%%%%%%%%
\subsection{Cubic forms}
%%%%%%%%%%%%%%%%%%%%%%%%%%%%%%%%%
A {\it cubic form} on $\bbF_2^n$ is a function
$\a:\bbF_2^n\to\bbF_2$ of the form
$$
\a(x)=\sum_{1\leq{}i\leq{}j\leq{}k\leq{}n}\a_{ijk}\,
x_ix_jx_k,
$$
where $x=(x_1,\ldots,x_n)$ and where $\a_{ijk}\in\{0,1\}$.
Note that, over $\bbF_2$, we have $x_i^2=x_i$ and therefore
every cubic polynomial can be viewed as a {\it homogeneous} cubic form.

Consider the following cubic function:
\begin{equation}
\label{NashAlp}
\a_\bbO(x)=\sum_{1\leq{}i<j<k\leq{}n}
x_ix_jx_k+
\sum_{1\leq{}i<j\leq{}n}\,x_ix_j+
\sum_{1\leq{}i\leq{}n}x_i.
\end{equation}
The function $\a_\bbO$ is a {\it counting function}.
This means, it is invariant with respect to the action of the group
of permutations on the coordinates and depends only on the {\it Hamming weight}\footnote{
Recall that the Hamming weight of $x$ is the number of components $x_i=1$.}
of~$x$ that we denote by ${\mathrm wt}(x)$.
More precisely,
$$
\a_\bbO(x)=\left\{
\begin{array}{lll}
0,& \hbox{if }\; {\mathrm wt}(x)\equiv0& \mod4\\[4pt]
1,&\hbox{otherwise}.
\end{array}
\right.
$$

%%%%%%%%%%%%%%%%%%%%%%%%%%%%%%%%%
\subsection{Twisted group algebras}\label{CuT}
%%%%%%%%%%%%%%%%%%%%%%%%%%%%%%%%%

Let $f:\bbF_2^n\times\bbF_2^n\to\bbF_2$ be a function
of two variables.
The {\it twisted group algebra} associated to $f$ is the real
$2^n$-dimensional algebra denoted by $\left(\R[\bbF_2^n],f\right)$,
with basis $\{e_x\,|\,x\in\bbF_2^n\}$ and the product given by
$$
e_x\cdot{}e_{x'}=(-1)^{f(x,x')}\,e_{x+x'}.
$$
This algebra is, in general, neither commutative nor associative.
The non-commutativity is measured by the function
$$
\b(x,y):=f(x,y)+f(y,x),
$$
while the non-associativity is measured by the function
$$
\d{}f(x,y,z):=f(y,z)+f(x+y,z)+f(x,y+z)+f(x,y).
$$
Many classical algebras, such as the algebras of quaternions
$\bbH$, of octonions $\bbO$, and, more generally, the Clifford algebras
and the Cayley-Dickson algebras, can be realized as twisted group
algebras over $\bbF_2^n$; see~\cite{AM}.

%%%%%%%%%%%%%%%%%%%%%%%%%%%%%%%%%
\subsection{From cubic forms to algebra}\label{FCS}
%%%%%%%%%%%%%%%%%%%%%%%%%%%%%%%%%
There exists an interesting subclass of twisted group algebras
characterized by a cubic function in one variable,
instead of the function $f$ in two variables.
It was introduced and studied in~\cite{MGO},
and we give a very short account here.

Given a cubic form $\a$,
there exists a (unique modulo coboundary) ``twisting function'' $f$ satisfying the conditions:
\begin{itemize}
\item[(a)] 
First polarization formula:
$$
\b(x,y)= \a(x+y)+\a(x)+\a(y).
$$
\item[(b)] 
Second polarization formula:
$$
\d{}f(x,y,z)=\a(x+y+z)+\a(x+y)+\a(x+z)+\a(y+z)+\a(x)+\a(y)+\a(z).
$$
\item[(c)] Linearity of $f$ in 2nd variable:
$$f(x,y+y')=f(x,y)+f(x,y').$$
\item[(d)] 
Reconstruction of $\a$ from $f$:
$$
f(x,x)=\a(x).
$$
\end{itemize}

The existence of $f$ follows from an explicit formula. 
We replace every monomial in $\a$
according to the following rule:
\begin{equation}
\label{Fakset}
\begin{array}{rcl}
x_ix_jx_k&\longmapsto&x_ix_jy_k+x_iy_jx_k+y_ix_jx_k,\\[4pt]
x_ix_j&\longmapsto&x_iy_j,\\[4pt]
x_i&\longmapsto&x_iy_i.
\end{array}
\end{equation}
where $i<{}j<{}k$,
and obtain this way a function $f$ in two arguments,
satisfying properties~(a)-(d).

In particular, the cubic form $\a_\bbO$ generates the following
the twisting function:
$$
f_\bbO(x,y)=\sum_{1\leq{}i<j<k\leq{}n}
\left(
x_ix_jy_k+x_iy_jx_k+y_ix_jx_k
\right)+
\sum_{1\leq{}i\leq{}j\leq{}n}\,x_iy_j.
$$
The obtained twisted group algebras, denoted by $\bbO_n$,
generalize the classical algebra $\bbO$ of octonions 
(which is isomorphic to $\bbO_3$).

\begin{rem}
One cannot choose a polynomial of degree $\geq4$,
instead of a cubic function, in order to construct
a twisting function $f$ satisfying properties (a)-(d).
Indeed, let us apply the differential $\d$ to the equation in property (b).
Since $\d^2=0$, one obtains after a short computation:
$$
\begin{array}{rl}
0=&
\a(x+y+z+t)\\[4pt]
&+\a(x+y+z)+\a(x+y+t)+\a(x+z+t)+\a(y+z+t)\\[4pt]
&+\a(x+y)\!+\!\a(x+z)\!+\!\a(x+t)\!+\!\a(y+z)\!+\!\a(y+t)\!+\!\a(z+t)\\[4pt]
&+\a(x)+\a(y)+\a(z)+\a(t).
\end{array}
$$
This is exactly the condition that $\a$ is a polynomial of degree at most $3$.
 \end{rem}

%%%%%%%%%%%%%%%%%%%%%%%%%%%%%%%%%
\subsection{From algebra to square identities}
%%%%%%%%%%%%%%%%%%%%%%%%%%%%%%%%%

Consider a twisted group algebra $\left(\R[\bbF_2^n],f\right)$,
its elements are of the form
$$
a=\sum_{x\in\bbF_2^n}a_x\,e_x,
$$
with coefficients $a_x\in\R$.
Define the Euclidean norm by
$$
\|a\|^2:=\sum_{x\in\bbF_2^n}a_x^2.
$$
Consider two sets $A,B\subset\bbF_2^n$ and
the coordinate subspaces $\A$ and $\B\subset\left(\R[\bbF_2^n],f\right)$:
$$
\big\{a\;|\;a=\sum_{x\in{}A}a_x\,e_x
\big\}
\quad\hbox{and}\quad
\big\{b\;|\;b=\sum_{y\in{}B}b_y\,e_y
\big\}.
$$

The condition
\begin{equation}
\label{NorC}
\|a\|^2\,\|b\|^2=\|a\,b\|^2,
\end{equation}
gives a square identity of size $[|A|,|B|,|A+B|]$.

Consider the twisted group algebra $\left(\R[\bbF_2^n],f\right)$ corresponding to a cubic function $\a$
as explained in Section~\ref{FCS}.
It turns out that the condition~\eqref{NorC} can be very easily expressed in terms of the form $\a$.

The following statement is proved in~\cite{MGO,LMO,Des}.
\begin{lem}
\label{NashLem}
The condition~\eqref{NorC} is equivalent to the following:
for all $x\not=x'\in{}A$ and $y\not=y'\in{}B$ such that $x+x'=y+y'$,
one has
$
\a(x+x')=1.
$
\end{lem}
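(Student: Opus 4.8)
The plan is to unwind the definition of the norm $\|ab\|^2$ directly in the basis $\{e_x\}$ and compare it with the product $\|a\|^2\|b\|^2$. First I would expand the product $ab$ in the twisted group algebra. Writing $a=\sum_{x\in A}a_x e_x$ and $b=\sum_{y\in B}b_y e_y$, the product is
$$
ab=\sum_{x\in A,\,y\in B}a_x b_y\,(-1)^{f(x,y)}\,e_{x+y}.
$$
Collecting the coefficient of a fixed basis vector $e_z$ means summing over all pairs $(x,y)$ with $x+y=z$, so $\|ab\|^2=\sum_{z}\big(\sum_{x+y=z}a_x b_y(-1)^{f(x,y)}\big)^2$. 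Meanwhile $\|a\|^2\|b\|^2=\sum_{x,y}a_x^2 b_y^2$. The key step is to subtract these two expressions and identify exactly which cross-terms obstruct the equality.

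Next I would carry out this subtraction carefully. Expanding the square in $\|ab\|^2$ produces diagonal terms $a_x^2 b_y^2$ (one for each pair, since $(-1)^{2f}=1$) together with cross-terms indexed by two distinct pairs $(x,y)\neq(x',y')$ that contribute to the same $z$, i.e.\ with $x+y=x'+y'=z$. The diagonal terms are precisely $\|a\|^2\|b\|^2$, so the identity~\eqref{NorC} holds if and only if all the cross-terms vanish. Each cross-term has the shape $2\,a_x b_y a_{x'} b_{y'}\,(-1)^{f(x,y)+f(x',y')}$, summed over the unordered pairs of distinct index-pairs sharing a common sum $z$. Since the coefficients $a_x,b_y$ are arbitrary reals, the total cross-contribution vanishes identically as a polynomial in the $a$'s and $b$'s exactly when each such cross-term cancels against a companion term; so I would pair up the offending monomials.

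The main obstacle, and the heart of the proof, is the combinatorial bookkeeping of these cross-terms. The condition $x+y=x'+y'$ with $(x,y)\neq(x',y')$ forces $x\neq x'$ and $y\neq y'$, and rewrites as $x+x'=y+y'$. Given such a configuration, the same monomial $a_x b_y a_{x'} b_{y'}$ also arises from the pair of index-pairs $(x,y')$ and $(x',y)$, which likewise share the common sum $x+y'=x'+y$. Thus the monomials organize into quadruples, and the two companion contributions to a given monomial carry the signs $(-1)^{f(x,y)+f(x',y')}$ and $(-1)^{f(x,y')+f(x',y)}$. These cancel precisely when the two signs are opposite, i.e.\ when
$$
f(x,y)+f(x',y')+f(x,y')+f(x',y)=1.
$$
The remaining task is to show this bilinear-difference expression equals $\a(x+x')$ under the hypothesis $x+x'=y+y'$. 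I would exploit property~(c), linearity of $f$ in its second argument, to combine $f(x,y)+f(x,y')=f(x,y+y')$ and $f(x',y)+f(x',y')=f(x',y+y')$, collapsing the four terms to $f(x,y+y')+f(x',y+y')=f(x+x',y+y')$ by a further use of the polarization and cocycle properties; substituting $y+y'=x+x'$ and invoking property~(d), $f(w,w)=\a(w)$ with $w=x+x'$, yields $\a(x+x')$. Hence the cross-terms cancel exactly when $\a(x+x')=1$ for every such configuration, which is the claimed equivalence. The delicate point to verify is that the linearity and reconstruction identities~(c)--(d) genuinely reduce the four-term sign to $f(x+x',x+x')$; once that algebraic reduction is in place, the pairing argument closes the proof.
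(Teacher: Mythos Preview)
Your overall strategy is correct and is essentially the argument given in the references the paper cites (the paper itself does not prove the lemma but defers to \cite{MGO,LMO,Des}). Expanding $\|ab\|^2-\|a\|^2\|b\|^2$, matching the diagonal terms, and then pairing the two cross-contributions $(x,y),(x',y')$ and $(x,y'),(x',y)$ attached to a given monomial $a_xa_{x'}b_yb_{y'}$ is exactly the right mechanism, and the cancellation condition
\[
f(x,y)+f(x',y')+f(x,y')+f(x',y)=1
\]
is the correct target.

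There is, however, a genuine gap in your final reduction. After correctly using property~(c) to reach $f(x,y+y')+f(x',y+y')$, you claim this ``collapses to $f(x+x',y+y')$''. That step would require $f$ to be additive in its \emph{first} argument, which it is not: for a genuinely cubic $\alpha$ the explicit rule~\eqref{Fakset} produces terms quadratic in the first variable, and full bilinearity of $f$ would force $\alpha(x)=f(x,x)$ to be a quadratic form. The identity $f(x,w)+f(x',w)=f(x+x',w)$ is therefore false in general, and your appeal to ``polarization and cocycle properties'' does not salvage it.

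The correct way to finish is to substitute $y+y'=x+x'$ \emph{first} and then use~(c), (d) and~(a):
\[
f(x,x+x')+f(x',x+x')=\bigl(\alpha(x)+f(x,x')\bigr)+\bigl(f(x',x)+\alpha(x')\bigr)
=\alpha(x)+\alpha(x')+\beta(x,x'),
\]
and property~(a) gives $\beta(x,x')=\alpha(x+x')+\alpha(x)+\alpha(x')$, so over $\bbF_2$ the sum is $\alpha(x+x')$. With this correction your pairing argument closes and the equivalence follows.
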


We will apply the above lemma in the case $\a=\a_\bbO$,
and choosing $B=\bbF_2^n$.
The condition on the set $A$ then reads 
${\mathrm wt}(x+x')$ is not a multiple of $4$,
for all distinct $x,x'\in{}A$.

%%%%%%%%%%%%%%%%%%%%%%%%%%%%%%%%%
\section{Construction of Hurwitzian sets}\label{HurSec}
%%%%%%%%%%%%%%%%%%%%%%%%%%%%%%%%%

In this section, we construct examples of sets $A\subset\bbF_2^n$
of cardinality $|A|=\r(2^n)$ satisfying the condition:
${\mathrm wt}(x+x')$ is not a multiple of $4$,
for all distinct $x,x'\in{}A$.
Such sets were already considered in~\cite{LMO}
where they were called {\it Hurwitzian sets}.
In particular, we discuss a relation to
the binary Hadamard matrices.

%%%%%%%%%%%%%%%%%%%%%%%%%%%%%%%%%
\subsection{Cases $n\equiv1,2\,(\!\!\!\mod4)$}
%%%%%%%%%%%%%%%%%%%%%%%%%%%%%%%%%
In this case, $\r(2^n)=2n$.
The following choice of a Hurwitzian set is perhaps the most 
obvious.
Choose the following set:
$$
A=\left\{
0,\;e_1,\;e_2,\,\ldots,\,e_n,\;e_1+e_2,\;e_1+e_3,\,\ldots,\,e_1+e_n
\right\}.
$$
For all $x,x'\in{}A$, the weight of the sum satisfies ${\mathrm wt}(x+x')\leq3$, and thus
$\a_\bbO(x+x')=1$, provided $x+x'\not=0$.
Therefore $A$ is a Hurwitzian set.

Note that the above choice is not unique.
However, it is easy to see that the set $A$ is 
the only Hurwitzian set which is a ``shift-minimal downset''
according to the terminology of~\cite{GT1}.

%%%%%%%%%%%%%%%%%%%%%%%%%%%%%%%%%
\subsection{Case $n\equiv3\,(\!\!\!\mod4)$}
%%%%%%%%%%%%%%%%%%%%%%%%%%%%%%%%%
In this case, $\r(2^n)=2n+2$ which is the most interesting
situation for many reasons.

Consider the element of maximal weight:
\begin{equation}
\label{Omeq}
\om=(1\,1\,\ldots\,1)=e_1+\cdots+e_n.
\end{equation}
One can choose the above set $A$, completed by $\om$ and $e_1+\om$.
Let us give a more symmetric example.

Choose the set
$$
A=\left\{
0,\;\om,\;e_1,\;e_2,\,\ldots,\,e_n,\;e_1+\om,\;e_2+\om,\,\ldots,\,e_n+\om
\right\}.
$$
The weight of a non-zero element of the sumset $A+A$ can be one of the
following four values: $1,2,n-1$, or $n-2$.
Since this is never a multiple of $4$, we conclude that $A$ is
a Hurwitzian set.
Moreover, it is not difficult to show that the above set is
the only Hurwitzian set invariant with respect to the group
of permutations $\gS_n$. 

%%%%%%%%%%%%%%%%%%%%%%%%%%%%%%%%%
\subsection{Another choice in the case $n\equiv3\,(\!\!\!\mod8)$, relation to the Hadamard matrices}\label{HadSec}
%%%%%%%%%%%%%%%%%%%%%%%%%%%%%%%%%

The case $n\equiv3\,(\!\!\!\mod8)$ is a subcase of the above one.
Remarkably, there is a choice of Hurwitzian set
based on the classical Hadamard matrices.

Recall that a {\it Hadamard matrix} is an 
$m\times{}m$-matrix $H$ with entries $\pm1$,
such that ${}^tHH=m\I$, where ${}^tH$ is the transpose
of $H$ and $\I$ is the identity matrix.
It is known that a Hadamard matrix can exist only if $m=1,2$ or $m=4s$;
existence for arbitrary $s$ is the classical Hadamard conjecture.

The construction is as follows.
We remove the first column of $H$ and construct two $(m-1)\times{}m$-matrices,
$H_1,H_2$ with entries $0,1$.
The matrix $H_1$ is obtained by replacing $1$ by $0$ and $-1$ by $1$,
the matrix $H_2$ is obtained by replacing $-1$ by $0$.

\begin{lem}
The rows of $H_1$ and $H_2$ form a Hurwitzian set in $\bbF_2^{4s-1}$,
provided $s$ is odd.
\end{lem}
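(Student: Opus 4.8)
The plan is to verify directly the defining property of a Hurwitzian set: for any two distinct rows $u,v$ among the rows of $H_1$ and $H_2$, the weight $\mathrm{wt}(u+v)$ should not be divisible by $4$. I would write $m=4s$ and let $r_1,\dots,r_m\in\{\pm1\}^m$ denote the rows of the Hadamard matrix $H$; the relation ${}^tHH=m\I$ means precisely that any two distinct rows $r_i,r_j$ agree in exactly $m/2=2s$ coordinates and disagree in the remaining $2s$. Passing to $0/1$ entries, I set $\a_i=(1-r_i)/2$ (the row of $H_1$) and $\b_i=(1+r_i)/2$ (the row of $H_2$), so that $\b_i=\mathbf{1}+\a_i$ over $\bbF_2$, i.e. each row of $H_2$ is the complement of the corresponding row of $H_1$. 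A useful preliminary remark is that negating a row $r_i$ merely swaps $\a_i$ and $\b_i$, and hence leaves the set $A$ of all rows of $H_1,H_2$ unchanged; I may therefore normalise $H$ so that its first column is constantly $+1$, which is exactly the column being deleted.

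With this normalisation I would split the pairs into three types. For two rows of the same block, $\a_i+\a_j$ and $\b_i+\b_j$ both equal $\a_i+\a_j$ over $\bbF_2$, whose support is the set of coordinates where $r_i,r_j$ disagree; since the normalised rows agree in the deleted first coordinate, deletion does not affect this weight, which is $m/2=2s$. For a cross pair $\a_i+\b_j=\mathbf{1}+(\a_i+\a_j)$, the support is the set of coordinates where $r_i,r_j$ \emph{agree}, of total size $m/2$. When $i=j$ this is the all-ones vector, of weight $m-1$ after deleting one coordinate; when $i\neq j$ the two rows agree in the deleted coordinate, so the post-deletion weight is $m/2-1=2s-1$.

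The final step is to feed in the hypothesis that $s$ is odd. Then $2s\equiv2$, $2s-1\equiv1$, and $m-1=4s-1\equiv3\pmod4$, so the three possible values of $\mathrm{wt}(u+v)$ are $\equiv2,1,3\pmod4$ respectively; in particular none is a multiple of $4$, and since all three are nonzero the $2m$ rows are automatically distinct. This makes $A$ a Hurwitzian set, of cardinality $2m=8s=\r(2^{4s-1})$, hence even an extremal one. The step that requires the most care is the cross pair with $i=j$: before removing a column it produces the all-ones vector of weight $m=4s\equiv0\pmod4$, which would violate the condition, and deleting a single coordinate is exactly what rescues it; dually, the parity assumption on $s$ is precisely what keeps the same-block weight $2s$ out of $4\Z$, so ``$s$ odd'' is genuinely necessary for the construction as stated.
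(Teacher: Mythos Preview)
Your proof is correct and follows essentially the same approach as the paper's: compute the three possible weights $2s$, $2s-1$, $4s-1$ and observe that none is a multiple of $4$ when $s$ is odd. You are more explicit than the paper about the normalisation of the first column (justifying it via the $\a_i\leftrightarrow\b_i$ swap under row negation), and you add the observation that the resulting set has cardinality $8s=\r(2^{4s-1})$ and is therefore extremal; both are welcome refinements but do not change the underlying argument.
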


\begin{proof}
It follows from the definition of a Hadamard matrix that
every sum of two distinct rows of $H_1$ is of weight $2s$,
and similarly for $H_2$.
The sum of a row of $H_1$ with a row of $H_2$ is of weight 
$2s-1$ or $4s-1$.
\end{proof}

\begin{ex}
\label{HamEx}
The (unique up to equivalence) $12\times12$ Hadamard matrix $H$
corresponds to the following $12\times11$ binary matrices:
$$
H_1=
\tiny{
\left(
\begin{array}{rrrrrrrrrrr}
1&1&1&1&1&1&1&1&1&1&1\\[4pt]
0&1&0&1&1&1&0&0&0&1&0\\[4pt]
0&0&1&0&1&1&1&0&0&0&1\\[4pt]
1&0&0&1&0&1&1&1&0&0&0\\[4pt]
0&1&0&0&1&0&1&1&1&0&0\\[4pt]
0&0&1&0&0&1&0&1&1&1&0\\[4pt]
0&0&0&1&0&0&1&0&1&1&1\\[4pt]
1&0&0&0&1&0&0&1&0&1&1\\[4pt]
1&1&0&0&0&1&0&0&1&0&1\\[4pt]
1&1&1&0&0&0&1&0&0&1&0\\[4pt]
0&1&1&1&0&0&0&1&0&0&1\\[4pt]
1&0&1&1&1&0&0&0&1&0&0
\end{array}
\right)}
\qquad
H_2=
\tiny{
\left(
\begin{array}{rrrrrrrrrrr}
0&0&0&0&0&0&0&0&0&0&0\\[4pt]
1&0&1&0&0&0&1&1&1&0&1\\[4pt]
1&1&0&1&0&0&0&1&1&1&0\\[4pt]
0&1&1&0&1&0&0&0&1&1&1\\[4pt]
1&0&1&1&0&1&0&0&0&1&1\\[4pt]
1&1&0&1&1&0&1&0&0&0&1\\[4pt]
1&1&1&0&1&1&0&1&0&0&0\\[4pt]
0&1&1&1&0&1&1&0&1&0&0\\[4pt]
0&0&1&1&1&0&1&1&0&1&0\\[4pt]
0&0&0&1&1&1&0&1&1&0&1\\[4pt]
1&0&0&0&1&1&1&0&1&1&0\\[4pt]
0&1&0&0&0&1&1&1&0&1&1
\end{array}
\right)}
$$
(which are related to the extended Golay code).
The rows of the matrices $H_1$ and $H_2$ constitute a Hurwitzian set
in $\bbF_2^{11}$ of cardinality $24$.
\end{ex}

An idea of further development is to understand
the relations of Theorem~\ref{ScratchF} to doubly even binary codes.
Existence of such relations is indicated by the above example
where the celebrated Golay code appears
explicitly.

\subsection{Case $n\equiv0\,(\!\!\!\mod4)$}
Recall that $\r(2^n)=2n+1$ in this case.
However, we will show in the next section that there are
no Hurwitzian sets in the case.
Moreover, we are convinced that a similar situation holds for
any cubic form.

\begin{conj}
Given a Boolean cubic function $\a$ on $\bbF_2^n$ with $n\equiv0\,(\!\!\!\mod4)$,
there is no set $A$ such that $\a\,\big|_{(A+A)\setminus\{0\}}\equiv1$ and
$|A|=2n+1$.
\end{conj}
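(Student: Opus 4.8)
The plan is to normalize the configuration by a translation, convert the cubic constraint into a vanishing statement for a single cubic form on a large subspace, and then extract a parity invariant tied to $n\equiv0\;(\!\!\!\mod4)$. First I would use that the hypothesis $\a\big|_{(A+A)\setminus\{0\}}\equiv1$ is invariant under translation $A\mapsto A+v$, since $(x+v)+(x'+v)=x+x'$. Thus I may assume $0\in A$ and set $A^*=A\setminus\{0\}$, so that $|A^*|=2n$, $\a(x)=1$ for every $x\in A^*$, and, with $\b(x,y)=\a(x+y)+\a(x)+\a(y)$, one has $\b(x,x')=1$ for all distinct $x,x'\in A^*$. The relevant feature of $n\equiv0\;(\!\!\!\mod4)$ is that $\r(2^n)=2n+1$ is \emph{odd}, so $A^*$ has even cardinality $2n$; being $2n$ vectors in $\bbF_2^n$, their linear relations form a code $\mathcal D\subseteq\bbF_2^{2n}$ of dimension $\geq n$ (identifying a relation $\sum_{x\in S}x=0$ with the indicator of $S$).

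Next I would exploit the degree-$3$ polarization. Let $\phi$ be the symmetric trilinear form equal to the right-hand side of property~(b). Since $\a$ has degree $\leq3$, one has the exact finite expansion
\[
\a\Big(\sum_{x\in S}x\Big)=\sum_{x\in S}\a(x)+\sum_{\{x,x'\}\subseteq S}\b(x,x')+\sum_{\{x,x',x''\}\subseteq S}\phi(x,x',x'').
\]
Applying this to a relation $S\in\mathcal D$, so the left-hand side is $\a(0)=0$, and inserting $\a\equiv1$ and $\b\equiv1$ yields, modulo $2$,
\[
\sum_{\{x,x',x''\}\subseteq S}\phi(x,x',x'')=|S|+\binom{|S|}{2}=\binom{|S|+1}{2}.
\]
I would then read this uniformly over $\mathcal D$: the left-hand side is the restriction to $\mathcal D$ of a cubic form $Q$ on $\bbF_2^{2n}$ determined by $\phi$, while $\binom{\mathrm{wt}+1}{2}$ is the restriction of the quadratic form $g(v)=\sum_i v_i+\sum_{i<j}v_iv_j$. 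Hence the existence of $A$ forces the single cubic form $Q+g$ on $\bbF_2^{2n}$ to vanish identically on the subspace $\mathcal D$ of dimension $\geq n$.

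Finally I would try to contradict this vanishing for $n\equiv0\;(\!\!\!\mod4)$ by a parity invariant, for instance the cubic Gauss sum $\sum_{v\in\mathcal D}(-1)^{(Q+g)(v)}$, which equals $|\mathcal D|$ exactly when $Q+g$ vanishes on $\mathcal D$, comparing it with what the rigidity of an \emph{extremal} (Hurwitz--Radon maximal) configuration permits. Here lies the main obstacle: the naive sum of the displayed identity over all $S\in\mathcal D$ collapses to $0=0$, so a genuinely finer invariant---or the Clifford-module rigidity underlying equality in the Hurwitz--Radon bound---is needed, and it must be made to work for an \emph{arbitrary} trilinear $\phi$. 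For the counting form $\a_\bbO$ the form $\phi$ is highly structured (which is what the following section settles), but controlling a general $\phi$ is precisely where the argument stalls, and this is why the statement is left as a conjecture.
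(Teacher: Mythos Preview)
The statement you are attempting is stated in the paper as a \emph{conjecture}; the authors offer no proof for general cubic $\alpha$ and only record that it has been checked for $n=4$ and for the particular form $\alpha=\alpha_{\bbO}$. There is therefore no paper proof to compare your proposal against.

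Your proposal is, by your own admission, not a proof either. The preliminary steps are correct: the translation normalization, the polarization expansion of a degree-$3$ function, and the conclusion that the cubic form $Q+g$ on $\bbF_2^{2n}$ must vanish on a subspace $\mathcal D$ of dimension at least $n$ are all valid. But, as you yourself note, this constraint does not by itself force a contradiction for an arbitrary trilinear $\phi$, and your suggested Gauss-sum invariant collapses; nothing in the sketch actually uses the hypothesis $n\equiv0\;(\!\!\!\mod4)$ beyond the cosmetic observation that $2n$ is even. One small inaccuracy: you say ``the following section settles'' the case $\alpha_{\bbO}$ via the structure of $\phi$. In fact the paper handles $\alpha_{\bbO}$ in the \emph{preceding} section (the proof of Theorem~1) by a completely different and much more elementary device, namely the involution $x\mapsto x+\omega$ with $\omega=(1,\ldots,1)$. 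Since $n\equiv0\;(\!\!\!\mod4)$ one has ${\mathrm wt}(v+\omega)=n-{\mathrm wt}(v)\equiv-{\mathrm wt}(v)\;(\!\!\!\mod4)$, so this involution preserves the Hurwitzian condition; applying it to every $x\in A$ with $x_n=1$ pushes $A$ into the hyperplane $x_n=0$, whence $|A|\leq\rho(2^{n-1})=2n$. That argument relies on the $\gS_n$-invariance of $\alpha_{\bbO}$ and has no analogue for a general cubic $\alpha$, which is exactly why the general statement remains open.
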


\noindent
This conjecture is easily verified for $n=4$,
as well as for $\a=\a_\bbO$ and arbitrary $n$.

%%%%%%%%%%%%%%%%%%%%%%%%%%%%%%%%%
\section{Proof of Theorems \ref{ScratchF} and \ref{ScratchFCorBis}}\label{PruS}
%%%%%%%%%%%%%%%%%%%%%%%%%%%%%%%%%

Let us prove Theorem \ref{ScratchFCorBis}.
Fix an arbitrary cubic form $\a$, and
let $A\subset{}\bbF_2^n$ be a set such that
$$
\a\;\big|\;_{(A+A)\setminus\{0\}}\equiv1.
$$
Lemma~\ref{NashLem} then implies
$\|a\|\,\|b\|=\|ab\|$ for all $a\in\A$ and arbitrary $b$.
We therefore obtain a square identity of size $\left[|A|,\,2^n,\,2^n\right]$.
The Hurwitz-Radon Theorem implies that $|A|\leq\r(2^n)$.
This bound is sharp as follows from the constructions of
Hurwitzian sets; see Section~\ref{HurSec}.
Theorem~\ref{ScratchFCorBis} follows.

Fixing $\a=\a_\bbO$, we obtain the statement of
Theorem \ref{ScratchF} in the cases where $n\equiv1,2,3\;(\!\!\!\mod4)$.
In the last case $n=4m$, Theorem~\ref{ScratchFCorBis} implies that $|A|\leq2n+1$.
It remains to show that if~$n=4m$, then
$|A|\leq2n$.

Suppose that $n=4m$ and $A$ is a Hurwitzian set.
Every element $x\in{}A$ can be replaced by $\widetilde{x}=x+\om$,
where $\om$ is the ``longest'' element~(\ref{Omeq}).
Indeed, one has
$$
x+x'=x+x'+\om.
$$
Replacing $x$ by $\widetilde{x}$ whenever $x_n=1$,
we obtain another Hurwitzian set, $\widetilde{A}$, such that
$x_n=0$ for all $x\in\widetilde{A}$.
But then $|\widetilde{A}|\leq\rho(2^{n-1})=2n$.

Theorem \ref{ScratchF} is proved.

\begin{rem}
Note that the Oddtown Theorem easily implies the upper bound $|\F|\leq2n+2$
for every $n$.
Indeed, let $\F_0\subset\F$ be the family of subsets of even cardinality,
without loss of generality we assume that~$\F_0$ contains the empty set.
It follows from the assumption of Theorem~\ref{ScratchF},
that for every $F(\not=\emptyset)\in\F_0$,
one has $|F|\equiv2\;(\!\!\!\!\mod4)$, 
and for every two elements,
$|F\cap{}F'|\equiv1\;(\!\!\!\!\mod2)$.
The Oddtown Theorem then implies $|\F_0|\leq{}n+1$.
Similarly, $|\F_1|\leq{}n+1$, where $\F_1$ is the odd subfamily of $\F$.
\end{rem}

%%%%%%%%%%%%%%%%%%%%%%%%%%%%%%%%%
\section{Additive quadruples}
%%%%%%%%%%%%%%%%%%%%%%%%%%%%%%%%%

Our next statement concerns so-called {\it additive quadruples}.
If $A,B\subset\bbF_2^n$, four elements $x,x'\in{}A,\,y,y'\in{}B$
form an additive quadruple $(x,x',y,y')$ if
$$
x+x'+y+y'=0.
$$
We call an additive quadruple {\it proper} if $x\not=x'$ and $y\not=y'$.

\begin{thm}
\label{JCor}
Let $A,B\subset\bbF_2^n$ with
$|A|\leq|B|$.
If every proper additive quadruple $(x,x',y,y')$ satisfies
$\a(x+x')=1$, then  
$
|A+B|\geq{}\Omega(|A|^{\frac{6}{5}}).
$
\end{thm}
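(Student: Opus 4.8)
The plan is to reduce the statement to an upper bound on the additive energy of the pair $(A,B)$ and then invoke the standard Cauchy--Schwarz lower bound for sumsets. Write $r_A(d):=|\{(x,x')\in A\times A:\ x+x'=d\}|$ and similarly $r_B$, and set $E(A,B):=\sum_d r_A(d)\,r_B(d)$, the number of additive quadruples. Over $\bbF_2$ one has $\sum_s r_{A+B}(s)=|A|\,|B|$ and $\sum_s r_{A+B}(s)^2=E(A,B)$, so Cauchy--Schwarz gives
\[
|A+B|\ \geq\ \frac{|A|^2\,|B|^2}{E(A,B)}.
\]
Hence it suffices to establish an upper bound of the shape $E(A,B)\leq C\,|A|^{4/5}|B|^2$: combined with $|B|\geq|A|$ this yields $|A+B|\geq C^{-1}|A|^{6/5}$, which is exactly the asserted $\Omega(|A|^{6/5})$.

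Next I would isolate where the hypothesis is used. The only quadruples counted by $E(A,B)$ with $d:=x+x'=y+y'=0$ are the $|A|\,|B|$ diagonal ones; every quadruple with $d\neq0$ is automatically proper and therefore satisfies $\a(d)=1$ by hypothesis. Since $\a(0)=0$, this gives
\[
E(A,B)=|A|\,|B|+\sum_{d\,:\,\a(d)=1} r_A(d)\,r_B(d),
\]
and the diagonal term is harmless because $|A|\,|B|\leq|A|^{4/5}|B|^2$ whenever $|B|\geq|A|$. So the entire problem collapses to bounding the \emph{restricted energy} $\sum_{\a(d)=1} r_A(d)\,r_B(d)$, that is, the additive energy supported on the common difference set $G:=\big((A+A)\cap(B+B)\big)\setminus\{0\}$, on which $\a\equiv1$. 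The trivial estimate here is $\sum_{\a(d)=1} r_A(d)\,r_B(d)\leq|A|\cdot|B|^2$, and the whole content is to save the factor $|A|^{1/5}$.

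To force that saving I would appeal to Theorem~\ref{ScratchFCorBis}: any set $S$ with all of its nonzero pairwise sums lying in $\{\a=1\}$ has $|S|\leq\r(2^n)$. The point is that the extremal configuration for the restricted energy is subspace-like (a coset of a subspace $V$ has $r$ constant and maximal on $V$), and such configurations are precisely what the cubic constraint forbids, since $\a\equiv1$ on $V\setminus\{0\}$ would force $|V|\leq\r(2^n)$. The natural route is therefore a Balog--Szemer\'edi--Gowers / dependent random choice argument on the bipartite additive graph between $A$ and $B$: after a dyadic localization to a popular level $\{d:\ r_A(d)\sim\tau\}$ (costing only a logarithmic factor), high restricted energy should produce a large subset $S\subseteq A$ many of whose pairwise sums are driven into $G$ and hence carry $\a=1$, to which Theorem~\ref{ScratchFCorBis} applies. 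Balancing the extraction parameters against this clique cap is what should convert the qualitative constraint into the quantitative bound $E(A,B)\leq C\,|A|^{4/5}|B|^2$.

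The main obstacle, and the step I expect to be genuinely delicate, is that the cap $\r(2^n)=O(n)$ is \emph{dimension dependent}: when $A$ is a thin subset of a high-dimensional $\bbF_2^n$ the bound $O(n)$ is far too weak to control $|A|$-dependent quantities, and a crude Tur\'an-type edge count using "no large clique" cannot suffice. The argument must instead use cubicity robustly—for instance through the fact that every first derivative $\Delta_h\a(x)=\a(x+h)+\a(x)$ is a \emph{quadratic} form, whose level sets are quasirandomly balanced—so as to control the \emph{number of popular common differences} level by level without reference to $n$. The degenerate cubics (those of low rank, where $\{\a=1\}$ is itself coset-structured) are exactly the cases where any such quasirandomness input weakens, so these must be handled separately, presumably by extracting a genuine subspace coset and bounding its dimension directly. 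Pinning down the bookkeeping so that the extracted subsets have their internal sums in the \emph{common} set $G$ (not merely in $\{\a=1\}$), and so that the powers balance to the exact exponent $4/5$ rather than a weaker one, is the crux of the proof.
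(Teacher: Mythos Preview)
Your Cauchy--Schwarz reduction to the energy bound $E(A,B)\leq C\,|A|^{4/5}|B|^2$ is correct, but you do not prove that bound; you only outline a hope based on Theorem~\ref{ScratchFCorBis} together with a BSG/dependent-random-choice extraction, and you yourself flag the obstruction: the clique cap $\rho(2^n)=O(n)$ is \emph{dimension}-dependent and says nothing about $|A|$ when $n$ is large compared to $|A|$. Nothing in the sketch overcomes this. The appeal to ``first derivatives $\Delta_h\alpha$ are quadratic'' is not turned into an actual estimate, the separate handling of ``degenerate cubics'' is not carried out, and there is no mechanism in the bookkeeping you describe that would force the specific exponent $4/5$ rather than some weaker saving. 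So the proposal has a genuine gap at precisely the step where all the content lies.

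The paper's argument is entirely different and does not go through additive energy. By Lemma~\ref{NashLem}, the hypothesis on proper additive quadruples is exactly the norm condition $\|a\|^2\,\|b\|^2=\|a\,b\|^2$ on the coordinate subspaces $\A,\B$ of the twisted group algebra attached to~$\alpha$; this produces a Hurwitz sum-of-squares identity of size $[r,r,N]$ with $r=|A|$ (after replacing $B$ by a subset of size $|A|$, which preserves the quadruple hypothesis) and $N=|A+B|$. The conclusion $|A+B|\geq\Omega(|A|^{6/5})$ is then just the lower bound $N_{\min}(r)\geq C_1\,r^{6/5}$ for $[r,r,N]$ identities, quoted from Hrubes--Wigderson--Yehudayoff~\cite{HWY}. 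In short, the exponent $6/5$ is imported wholesale from the composition-of-quadratic-forms literature; it is not produced by any combinatorial argument internal to $\bbF_2^n$, and in particular Theorem~\ref{ScratchFCorBis} plays no role here.
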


\begin{proof}
Fix, as above, an arbitrary cubic form $\a$.
Suppose that $A$ and $B$ are two subsets of same cardinality $|A|=|B|=r$, 
and such that for all proper additive quadruples
$(x,x',y,y')$ one has $\a(x+x')=\a(y+y')=1$.
One obtains an identity
of size $[r,r,N]$, where $N=|A+B|$.
 The Hurwitz problem is still open in this particular case and even
an asymptotic of the least value $N_{\rm min}$ as a function of $r$ is not known exactly.
However, it is known that asymptotically
$$
C_1\,r^{\frac{6}{5}}\leq{}N_{\rm min}(r)\leq{}C_2\,\frac{r^2}{\log(r)}.
$$
where $C_1$ and $C_2$ are some constants.
The upper bound follows easily from the Hurwitz-Radon theorem,
and the lower bound was recently obtained in~\cite{HWY},
which is precisely the statement of Theorem~\ref{JCor}.
\end{proof}

The Balog-Szemer\'edi-Gowers theorem~\cite{BS,Gow},
in the $\bbF_2^n$ case (see~\cite{GT}) states, roughly speaking,
that the sumset $A+B$ grows slowly, provided there are ``many''
additive quadruples (of order~$|A|^3$).
The above result is a sort of converse statement.

\medskip

\noindent \textbf{Acknowledgments}.
This project 
 was partially supported by the PICS05974 ``PENTAFRIZ'' of CNRS.
We are grateful to John Conway for stimulating discussions.

%%%%%%%%%%%%%%%%%

\end{document}